\documentclass[12pt]{article}
\usepackage{natbib}
\usepackage[utf8]{inputenc}
\usepackage{amssymb,latexsym}
\usepackage{graphicx,amsmath,amsfonts,amsthm}
\usepackage{graphicx}
\oddsidemargin    0in
\evensidemargin   0in
\topmargin       -0.45in
\textwidth        6.3in
\textheight       8.8in

\newtheorem{lemma}{Lemma}
\newtheorem{proposition}{Proposition}

\newtheorem{theorem}{Theorem}
\newtheorem{example}{Example}
\newtheorem{definition}{Definition}
\newtheorem{corollary}{Corollary}
\newtheorem{criterion}{Criterion}
\newcommand{\cn}{{\mathcal N}}
\newcommand{\cd}{{\mathcal D}}
\newcommand{\cl}{{\mathcal L}}

\def\row#1#2{{#1}_1,\ldots ,{#1}_{#2}}

\begin{document}
\vspace*{12mm}
\renewcommand{\thefootnote}{*}
\begin{center}
{\LARGE{\bf A family of Condorcet domains that are single-peaked on a circle}}
\end{center}
\vspace{10mm}
\renewcommand{\thefootnote}{\arabic{footnote}}
\setcounter{footnote}{0}
\vspace{10mm}
\begin{center}
{\sc Arkadii Slinko}\vspace{3mm}\\
Department of Mathematics\vspace{1mm}\\
The University of Auckland\vspace{1mm}\\
Private Bag 92019\vspace{1mm}\\
Auckland 1142, New Zealand\vspace{1mm}\\
a.slinko@auckland.ac.nz 
\vspace{15mm}\\
\date{}
\vspace{12mm}
\end{center}
{\bf Abstract} {\em Fishburn's alternating scheme domains occupy a special place in the theory of Condorcet domains. Karpov (2023) generalised these domains and made an interesting observation proving that all of them are single-picked on a circle. However, an important point that all generalised Fishburn domains are maximal Condorcet domain remained unproved. We fill this gap and suggest a new combinatorial interpretation of generalised Fishburn's domains which provide a constructive proof of single-peakedness of these domains on a circle.  We show that classical single-peaked domains and single-dipped domains as well as  Fishburn's alternating scheme domains belong to this family of domains while single-crossing domains do not.}\par\vspace{12mm}
\noindent{\bf JEL Classification} D71\par\bigskip\medskip

\noindent{\bf Keywords:} Majority voting, transitivity, Condorcet domains, Fishburn's domains, single-crossing domain, single-peaked property, single-peaked property on a circle.  

\newpage 

\section{Preliminaries}

A Condorcet domain is a set of linear orders on a given set of alternatives such that, if all voters of a certain society are known to have preferences over those alternatives represented by linear orders from that set, the pairwise majority relation of this society is acyclic. 
Any background information on Condorcet domains can be found in survey papers \citep{mon:survey,slinko-puppe23}.

One of the best known Condorcet domains is the domain of single-peaked linear orders on a line spectrum \cite{black1958theory}. Recently \cite{peters2020preferences} generalised this domain to single-peaked domains on a circle. And, although so generalised domains are not necessarily Condorcet, as will be demonstrated in this paper, they have a role to play in the theory of Condorcet domains too. 

Intuitively, a domain is single-peaked on a circle if all the alternatives can be placed on a circle so that, for every order of the domain, we can `cut' the circle once so that the given order becomes single-peaked on the resulting line spectrum. The location of the cutting point may differ for different orders of the domain.\par\medskip

Let us briefly touch some of the basics of Condorcet domains. More information about them can be found in \cite{mon:survey,slinko-puppe23}.

By $\cl(A)$ we will denote all linear orders on the set of alternatives $A$ which will always be assumed to be finite. 
For a linear order $v\in \cl(A)$ and two alternatives $x,y\in A$ we write $x\succ_v y$ if $v$ ranks $x$ higher than $y$.  
The set of alternatives $A$ is often taken as $[n]=\{1,2,\ldots,n\}$.
 Up to an isomorphism, for $n=3$ there are only three maximal Condorcet domains:
\begin{equation*}
\label{eq:three_domains}
\cd_{1}=\{123, 312, 132, 321\},\ \cd_{2}=\{123,231, 132, 321\},\ \cd_{3}=\{123, 213, 231, 321\}. 
\end{equation*}

The domain $\cd_1$ on the left  contains all the linear orders on $[3]$
in which $2$ is never ranked first, the domain $\cd_2$ in the middle  
contains all the linear orders on $[3]$ in which $1$ is never ranked second, and domain $\cd_3$ on the right 
contains all the linear orders on $[3]$ in which $2$ is never ranked last.
Following \cite{mon:survey}, we denote these conditions as $2N_{\{1,2,3\}}1$, and
$1N_{\{1,2,3\}}2$, and $2N_{\{1,2,3\}}3$, respectively. 

\begin{definition}
\label{complete_set}
Any condition of type $xN_{\{a,b,c\}}i$ with $x\in \{a,b,c\}$ and $i\in \{1,2,3\}$ is called a {\em never condition} since it being applied to a domain $\cd$ requires  that in orders of the restriction $\cd|_{\{a,b,c\}}$ of $\cd$ to $\{a,b,c\}$ alternative $x$ never takes $i$th position. We say that a subset $\mathcal{N}$ of
\[
\{ xN_{\{a,b,c\}}i \mid \{a,b,c\}\subseteq A,\   x\in \{a,b,c\}\ \text{and $i\in \{1,2,3\}$} \}
\]
is a {\em complete set of never-conditions} if $\mathcal{N}$ contains exactly one never condition for every triple $a,b,c$ of elements of $A$. 
\end{definition}

The following criterion is a well-known characterisation of Condorcet domains that goes back to \cite{Sen1966}. See
also Theorem~1(d) in \cite{puppe2019condorcet} and references therein.

\begin{criterion}
\label{th:classic}
A domain of linear orders $\cd\subseteq \mathcal{L}(A)$ is a Condorcet domain
if and only if it satisfies a complete set of never conditions.
\end{criterion}

The following property of Condorcet domains was shown to be very important.

\begin{definition}[\cite{slinko2019}]
\label{def:copious}
A Condorcet domain $\cd$ is {\em copious} if for any triple of alternatives $a,b,c\in A$ the restriction $\cd|_{\{a,b,c\}}$ of this domain to this triple has four distinct orders, that is, $|\cd|_{\{a,b,c\}}|=4$.
\end{definition}

We note that, if a Condorcet domain is copious, then it satisfies a unique complete set of never conditions of the form~\eqref{complete_set}. Copiousness is often an important step in proving maximality of the domain. 

 \begin{proposition}
\label{cdcncopious}
Let $\cn$ be a complete set of never conditions and $\cd(\cn)$ is the set of all linear orders from $\cl(A)$ that satisfy $\cn$.  If $\cd(\cn)$ is copious, then $\cd(\cn)$ is a maximal Condorcet domain.
\end{proposition}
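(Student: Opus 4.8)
The plan is to lean on Criterion~\ref{th:classic} twice and then run a short counting argument on three-element restrictions. First note that maximality is the only thing to prove: since $\cn$ is a complete set of never conditions, Criterion~\ref{th:classic} immediately tells us that $\cd(\cn)$ is a Condorcet domain. So let $\cd'$ be an arbitrary Condorcet domain with $\cd(\cn)\subseteq\cd'$; I would aim to show $\cd'=\cd(\cn)$, and for this it suffices to prove the reverse inclusion $\cd'\subseteq\cd(\cn)$, i.e. that every linear order in $\cd'$ satisfies every never condition belonging to $\cn$.

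The heart of the argument is a triple-by-triple comparison of restrictions. Fix a triple $\{a,b,c\}\subseteq A$, and let $xN_{\{a,b,c\}}i$ be the (unique, by Definition~\ref{complete_set}) never condition that $\cn$ imposes on this triple. Since a never condition forbids exactly $2$ of the $6$ linear orders of $\cl(\{a,b,c\})$, and since $\cd'$, being a Condorcet domain, satisfies some never condition on $\{a,b,c\}$ by Criterion~\ref{th:classic}, we get $|\cd'|_{\{a,b,c\}}|\le 4$. On the other hand, copiousness of $\cd(\cn)$ gives $|\cd(\cn)|_{\{a,b,c\}}|=4$, and the hypothesis $\cd(\cn)\subseteq\cd'$ forces $\cd(\cn)|_{\{a,b,c\}}\subseteq\cd'|_{\{a,b,c\}}$. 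Squeezing these together yields $\cd'|_{\{a,b,c\}}=\cd(\cn)|_{\{a,b,c\}}$.

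Now $\cd(\cn)|_{\{a,b,c\}}$ satisfies $xN_{\{a,b,c\}}i$ by the very definition of $\cd(\cn)$, hence so does $\cd'|_{\{a,b,c\}}$. Because the triple $\{a,b,c\}$ was arbitrary, every order $v\in\cd'$ respects all the never conditions in $\cn$, so $v\in\cd(\cn)$; thus $\cd'\subseteq\cd(\cn)$ and therefore $\cd'=\cd(\cn)$. Since no Condorcet domain properly contains $\cd(\cn)$, it is a maximal Condorcet domain.

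The argument is short, and the only point needing a moment's care is the elementary observation that a never condition deletes exactly two of the six orders on a triple, so that "satisfies a never condition on $\{a,b,c\}$" entails "$|\cd'|_{\{a,b,c\}}|\le 4$"; this is precisely what lets copiousness close the gap. One could alternatively invoke the remark preceding the statement, that a copious Condorcet domain satisfies a unique complete set of never conditions, but the direct counting above makes the proof self-contained.
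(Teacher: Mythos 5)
Your proof is correct and follows essentially the same route as the paper's: both hinge on the observation that a Condorcet domain restricted to any triple admits at most four orders, so copiousness of $\cd(\cn)$ leaves no room to add anything new. The paper phrases this as a contradiction obtained by adjoining a single order $u\notin\cd(\cn)$ and finding a triple with five restrictions, while you work with an arbitrary Condorcet superset $\cd'$ and deduce equality of all triple restrictions, but the underlying counting argument is identical.
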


\begin{proof}
Suppose $\cd(\cn)$ is copious but not maximal. Then there exists a linear order $u$ such that $\cd'=\cd(\cn)\cup \{u\}$ is a Condorcet domain. Since $u\notin \cd(\cn)$ for a certain triple of alternatives $a,b,c$ the domain $\cd'|_{\{a,b,c\}}$ contains an order on $a,b,c$ which is not in $\cd|_{\{a,b,c\}}$. But then $\cd'|_{\{a,b,c\}}$ contains five orders on $a,b,c$ which is not possible as it would not be a Condorcet domain.
\end{proof}

Many Condorcet domains are defined relative to some sort of {\em societal axis}, also called {\em spectrum}. In politics it is often referred to as left-right spectrum of political opinions. 

\begin{definition}
A domain $\cd\subseteq \cl(A)$ is said to be (classical) single-peaked if there exists a societal axis (spectrum)
\[
a_1\triangleleft a_2\triangleleft \cdots\triangleleft a_n
\]
such that for every linear order $v\in \cl(A)$ and $a\in A$ the upper counter set $U(a,v)={\{b\in A \mid b\succ_v a\}}$ is convex relative to the spectrum. By $SP_n(\triangleleft)$ we will denote the domain of all single-peaked orders on $\triangleleft$.
\end{definition}

Up to an isomorphism $A$ is often taken as $[n]$ and the societal axis as $1<2<\cdots<n$.\par\medskip

Never conditions allow us to define useful classes of Condorcet domains as was pioneered by Peter \cite{PF:1997} who introduced the so-called alternating scheme of never conditions and constructed Condorcet domains of large order. \cite{karpov23} generalised his scheme as follows.

\begin{definition}[\cite{karpov23}]
Let $A=[n]$.
A complete set of never conditions is said to be a {\em generalised alternating scheme}, if for some subset $K\subseteq [2,\ldots,n-1]$ and
for all $1\le i<j<k\le n$ we have 
\begin{equation}
\label{eq:alt_scheme}
jN_{\{i,j,k\}}3,\ \text{if $j\in K$, and $ jN_{\{i,j,k\}}1$, if $j\notin K$}. 
\end{equation}
The domain which consists of all linear orders satisfying the generalised alternating scheme is called the {\em generalised Fishburn's domain} or {\em GF-domain}.
\end{definition}
The original Fishburn's alternating scheme has $K$ equal to the set of even numbers in ${[2,\ldots,n{-}1]}$. 
 The GF-domain constructed using a subset $K\subseteq [2,\ldots,n-1]$ will be denoted as $F_K$. Every GF-domain has  orders $12\ldots n$ and $n\ldots 21$ as they obviously satisfy conditions~\eqref{eq:alt_scheme}. Domains with this property are said to have {\em maximal width} \citep{Puppe2018}.

 \section{Combinatorial representation of GF-domains}
 
The idea of this  representation comes from an example in \cite{DKK:2010}.\par\medskip

A set of $n$ vertices on a circle, some white and some black, are numbered by integers $1,2,\ldots,n$. We will often identify the vertices with the numbers on them.

\begin{definition}
An arrangement of black and white vertices on a circle will be called a {\em necklace} and the vertices themselves will be called {\em beads}.
\end{definition}

\begin{definition}
\label{def:w-convex}
A set of beads $X\subseteq [n]$ is said to be {\em white convex} (or simply {\em $w$-convex}) if 
\begin{itemize}
\item $X$ is an arc of the circle;
\item $X$ does not consist of a single black bead;
\item There does not exist $i<j<k$ such that $i,k\in X$, $j\notin X$ and $j$ is white. 
\end{itemize}
\end{definition}

\begin{definition}
A {\em flag} of $w$-convex sets is a sequence $\row Xn$ of $w$-convex sets 
\begin{equation}
\label{eq:flag}
X_1\subset X_2\subset\cdots\subset X_n =[n],
\end{equation}
where $|X_k|=k$.
\end{definition}
Any flag \eqref{eq:flag} of $w$-convex sets defines a linear order $v=x_1x_2\ldots x_n$ on $[n]$, where $\{x_i\}=X_i\setminus X_{i-1}$ (for convenience we assume that $X_0=\emptyset$).

\begin{definition}
Given a necklace $S$ the domain $\cd(S)$ is the set of all linear orders corresponding to flags of $w$-convex sets.
\end{definition}

\begin{example}
Consider now the necklace $S$ presented on the picture on the left:
\begin{center}
\begin{minipage}{5cm}
\includegraphics[height=3.5cm]{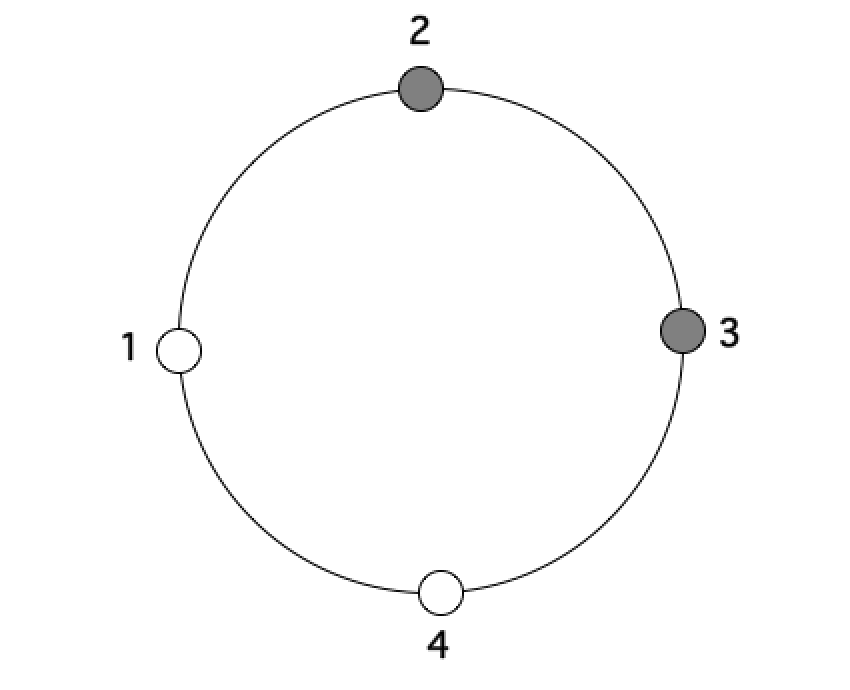}
\end{minipage}
\medskip
\begin{minipage}{5cm}
\[
\begin{array}{cccccccc}
1&1&1&1&4&4&4&4\\
2&2&4&4&1&1&3&3\\
3&4&2&3&2&3&1&2\\
4&3&3&2&3&2&2&1
\end{array}
\]
\end{minipage}
\end{center}
This is a single-dipped domain relative to the spectrum $1\triangleleft 2\triangleleft 3\triangleleft 4$ or $4\triangleleft 3\triangleleft 2\triangleleft 1$. 
\end{example}

\begin{example}
Consider now the necklace $S$ presented on the picture on the left:
\begin{center}
\begin{minipage}{5cm}
\includegraphics[height=3.5cm]{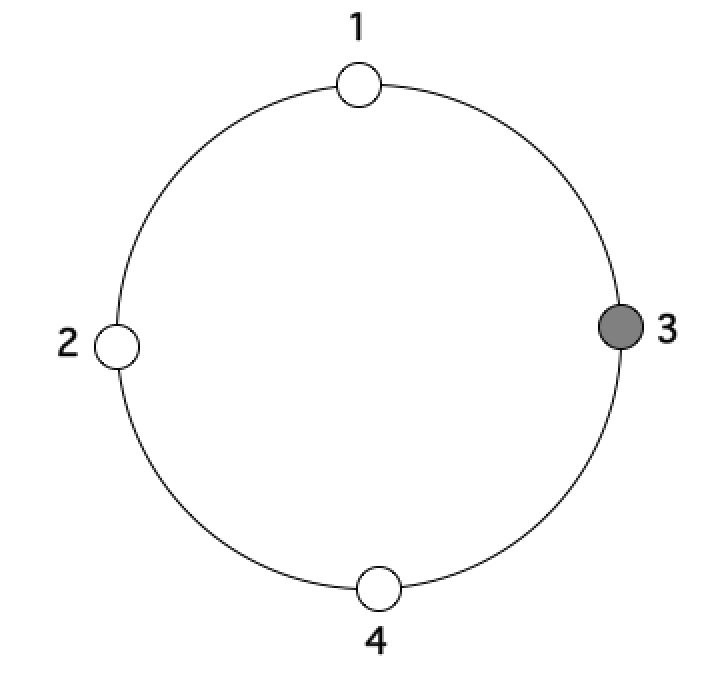}
\end{minipage}
\medskip
\begin{minipage}{5cm}
\[
\begin{array}{ccccccccc}
1&1&2&2&2&2&4&4&4\\
2&2&1&1&4&4&2&2&3\\
3&4&3&4&1&3&1&3&2\\
4&3&4&3&3&1&3&1&1
\end{array}
\]
\end{minipage}
\end{center}
Then domain $\cd(S)$ is given by the array on the right.
This is the Fishburn domain relative to the spectrum $1\triangleleft 2\triangleleft 3\triangleleft 4$. 
\end{example}
 
Our example shows that the construction is promising and generating maximal  GF-domains. Let us generalise these examples and offer a new combinatorial representation of GF-domains from which we will deduce their maximality.   \par\medskip

Let $K\subseteq [2,\ldots,n-1]$ and ${L=[2,\ldots,n-1]\setminus K}$ be two complementary subsets of $[2,\ldots,n-1]$. Let $k_1<\ldots<k_s$ and $\ell_1<\ldots<\ell_t$ be ordered elements of $K$ and $L$, respectively, where $s+t=n-2$. Consider the following spectrum on the circle 
 \begin{equation}
 \label{eq:spectrum}
 1\triangleleft k_1\triangleleft\ldots\triangleleft k_s\triangleleft n\triangleleft \ell_t\triangleleft \ldots\triangleleft \ell_1\triangleleft 1.
 \end{equation}
 Mark beads $1,\row ks,n$ white and $\row{\ell}t$ black to obtain a necklace $S_K$.
 
 \begin{example}
  For $n=3$ we have two options: one with $K_1=\emptyset$ and another with $K_2=\{2\}$. Respectively we have two necklaces $S_{K_1}$ and $S_{K_2}$:
 \begin{center}
\includegraphics[height=5cm]{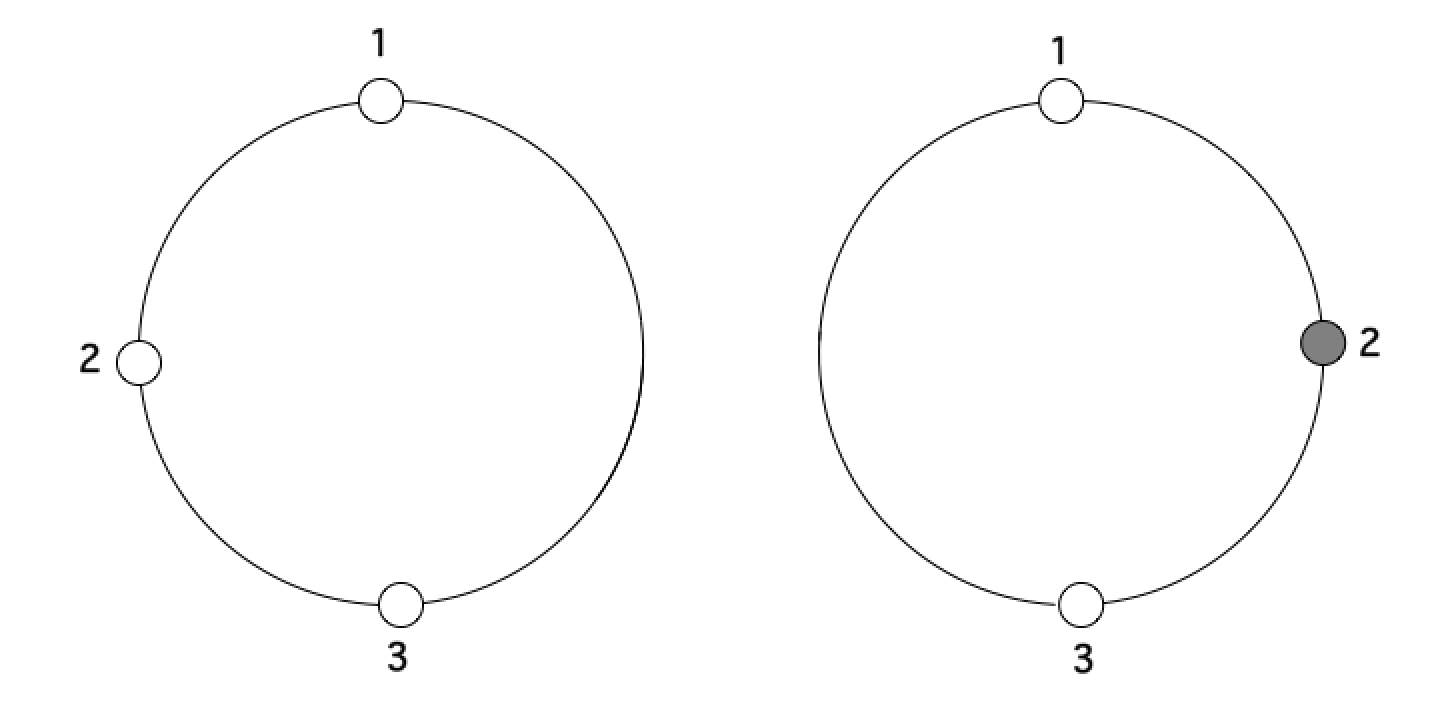}
\end{center}
Then $\cd(S_{K_1})=\{123, 213, 231, 321\}$ and $\cd(S_{K_2})=\{123, 132, 312, 321\}$ which are $F_{K_1}$ and $F_{K_2}$, respectively. These are single-peaked and single-dipped triples.
 \end{example}
 
  \begin{proposition}
 If $K=[2,\ldots,n-1] $, then $\cd(S_K)=F_K$ is the classical single-peaked domain.
 \end{proposition}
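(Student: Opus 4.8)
The plan is to show that when $K=[2,\ldots,n-1]$ the necklace $S_K$ has all beads white, so that $w$-convexity reduces to ordinary convexity on the circle, and then to identify the resulting domain $\cd(S_K)$ with the classical single-peaked domain on the line spectrum $1\triangleleft 2\triangleleft\cdots\triangleleft n$ obtained by cutting the circle at the arc between $n$ and $1$ (the arc not containing any $\ell_i$, since $L=\emptyset$). Concretely, when $L=\emptyset$ the spectrum \eqref{eq:spectrum} becomes $1\triangleleft 2\triangleleft\cdots\triangleleft n\triangleleft 1$, a circle on which the numbers appear in their natural cyclic order and every bead is white.

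First I would unpack the definition of $w$-convexity in this all-white case: since no bead is black, the second bullet of Definition~\ref{def:w-convex} is vacuous, and the third bullet says exactly that $X$ contains no ``gap'', i.e. there is no $i<j<k$ with $i,k\in X$ and $j\notin X$. Combined with the first bullet (that $X$ is an arc of the circle), I claim this forces $X$ to be an \emph{interval} $[a,b]=\{a,a+1,\ldots,b\}$ of consecutive integers in the usual order on $[n]$: an arc of the circle $1\triangleleft 2\triangleleft\cdots\triangleleft n\triangleleft 1$ is either such an interval or a ``wrap-around'' set $\{b,b+1,\ldots,n\}\cup\{1,2,\ldots,a\}$ with $a<b$; the latter, as soon as it is a proper nonempty subset, has a forbidden triple (take $i=a$, $k=b$, and any $j$ with $a<j<b$), so only the genuine intervals $[a,b]$ survive. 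Hence a flag of $w$-convex sets is precisely a chain $X_1\subset X_2\subset\cdots\subset X_n=[n]$ of intervals with $|X_k|=k$.

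Next I would match these interval-flags with single-peaked orders. A chain of intervals of growing size with $X_n=[n]$ is obtained by starting from a singleton $\{p\}$ and at each step adjoining either the element immediately to the left or the element immediately to the right of the current interval; this is exactly the classical description of single-peaked orders with peak $p$ on the axis $1<2<\cdots<n$. I would verify the correspondence in both directions: the order $v=x_1x_2\ldots x_n$ read off from such a flag via $\{x_i\}=X_i\setminus X_{i-1}$ has every upper contour set $U(x_i,v)=X_i$ an interval, hence convex, so $v\in SP_n(\triangleleft)$; conversely, for any $v\in SP_n(\triangleleft)$ the sets $X_i=\{x\in A\mid x\succeq_v x_i\}$ (the top $i$ elements of $v$) are convex by definition of single-peakedness, they form a flag, and each is an arc with no gap, hence $w$-convex. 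Therefore $\cd(S_K)=SP_n(\triangleleft)$.

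The only mildly delicate point — and the step I expect to need the most care — is the topological bookkeeping about arcs of the circle: one must be careful that ``$X$ is an arc'' together with ``$X$ has no white gap'' really does exclude the wrap-around arcs and leaves exactly the linear intervals, and in particular that the full set $[n]=X_n$ is allowed (it is an arc, the whole circle, and has no gap). This is elementary but deserves an explicit sentence so that the reduction from circular $w$-convexity to the linear convexity in the definition of $SP_n(\triangleleft)$ is airtight; everything else is a direct unwinding of definitions. Maximality of the resulting domain is then either quoted as the folklore fact about $SP_n(\triangleleft)$ or deduced from Proposition~\ref{cdcncopious} once one checks copiousness, though for the present proposition only the identification $\cd(S_K)=SP_n(\triangleleft)$ is required.
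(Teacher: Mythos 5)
Your proof is correct and follows essentially the same route as the paper: with no black beads the $w$-convex sets are exactly the intervals $[a,b]\subseteq[n]$ (the wrap-around arcs being killed by the third bullet of Definition~\ref{def:w-convex}), and flags of such intervals correspond exactly to single-peaked orders on $1\triangleleft 2\triangleleft\cdots\triangleleft n$. You merely spell out more explicitly than the paper the exclusion of wrap-around arcs and the flag-to-order bijection, which is a harmless (and welcome) elaboration.
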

 
 \begin{proof}
 Since $L=\emptyset$ in $S_K$ there are no black beads and beads 1 and $n$ are neighbours on the circle. The only $w$-convex set containing both of them is the longer arc $Z$ with endbeads 1 and $n$, that is, $\{1,2,\ldots,n\}$.  Any arc $X\subseteq Z$ is $w$-convex.  So $w$-convex sets coincide with the upper contour sets of the classical single-peaked domain with the spectrum  $1\triangleleft 2\triangleleft\ldots\triangleleft n-1\triangleleft n$.
 \end{proof}
 
\begin{lemma}
\label{dom_S_K}
 In domain $\cd(S_K)$:
 \begin{itemize}
 \item[(i)] A black bead $a$ satisfies the never-top condition in any triple $\{a,b,c\}$ such that $b<a<c$, that is $aN_{\{a,b,c\}}1$. 
 \item[(ii)] A white bead $a$ satisfies the never-bottom condition in any triple $\{a,b,c\}$ such that $b<a<c$, that is $aN_{\{a,b,c\}}3$. 
 \end{itemize}
 \end{lemma}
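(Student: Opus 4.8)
The plan is to reduce both statements to claims about the $w$-convex sets that can occur as a level $X_i$ of a flag. Fix a triple with $b<a<c$ and suppose an order $v=x_1\dots x_n$ of $\cd(S_K)$, coming from a flag $\emptyset=X_0\subset X_1\subset\dots\subset X_n=[n]$, ranks $a$ first among $\{a,b,c\}$; writing $a=x_i$, the prefix $X_i=\{x_1,\dots,x_i\}$ is then a $w$-convex set occurring in a flag with $a\in X_i$ and $b,c\notin X_i$. If instead $v$ ranks $a$ last among $\{a,b,c\}$, then $X_{i-1}$ is a $w$-convex set with $b,c\in X_{i-1}$ and $a\notin X_{i-1}$. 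So for (i) it suffices to exclude a $w$-convex set occurring in a flag that separates the black bead $a$ from $\{b,c\}$, and for (ii) to exclude \emph{any} $w$-convex set containing $b,c$ but omitting the white bead $a$. I will use the evident restatement of the third bullet of Definition~\ref{def:w-convex}: a set $X$ satisfies it precisely when $X$ contains every white bead whose label lies numerically strictly between two labels of $X$. Part (ii) now drops out: if $Y$ is $w$-convex with $b,c\in Y$, then $a$ is a white bead with $b<a<c$, hence $a$ lies between two labels of $Y$, so $a\in Y$; thus no such $X_{i-1}$ exists, i.e.\ $aN_{\{a,b,c\}}3$.

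Part (i) genuinely needs the flag structure, since a $w$-convex set taken in isolation can separate a black bead from $\{b,c\}$ --- for instance a maximal run of several consecutive black beads is $w$-convex. The extra ingredient I would isolate is the following reachability fact: \emph{every $w$-convex set $X_i$ occurring in a flag and containing a black bead also contains the bead $1$ or the bead $n$}. Indeed $x_1$ is white, because a one-element $w$-convex set is not a single black bead; hence there is a least index $j\ge 2$ with $X_j$ containing a black bead, and $X_{j-1}$ is then a non-empty arc consisting of white beads only. By the construction of $S_K$ the white beads $1,k_1,\dots,k_s,n$ occupy a contiguous arc of the circle with $1$ and $n$ at its two ends, so the only black circular neighbours of such an all-white arc are $\ell_1$ (available only once the arc contains $1$) and $\ell_t$ (available only once it contains $n$); since the sets $X_k$ are nested, $X_i$ must therefore contain $1$ or $n$.

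With this in hand, assume for contradiction that a $w$-convex set $X$ occurring in a flag has $a=\ell_m\in X$ and $b,c\notin X$. Being an arc that is not the single black bead $\{a\}$, $X$ has at least two elements, hence contains $1$ or $n$. By \eqref{eq:spectrum} the labels increase as $1<k_1<\dots<k_s<n$ along one side of the circle and decrease as $n>\ell_t>\dots>\ell_1$ along the other, so they sit unimodally with peak $n$. Therefore, if $n\in X$, then since $X$ is an arc through both $\ell_m$ and $n$ it must contain the beads of one of the two circular arcs between them, i.e.\ either $\{\ell_m,\ell_{m+1},\dots,\ell_t,n\}\subseteq X$ or $\{\ell_m,\ell_{m-1},\dots,\ell_1,1,k_1,\dots,k_s,n\}\subseteq X$. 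In the first case $c>\ell_m$ is neither $n$ nor an $\ell_j$ with $j>m$, so $c$ is a white bead with $\ell_m<c<n$ lying between the members $\ell_m$ and $n$ of $X$, whence $c\in X$; in the second case $X$ already contains every label smaller than $\ell_m$, in particular $b$ --- both contradicting $b,c\notin X$. The case $1\in X$ is symmetric, exchanging the roles of $b$ and $c$ and of the two sides of the circle. Hence $a$ can never be ranked first, i.e.\ $aN_{\{a,b,c\}}1$.

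The only real difficulty is the observation in the second paragraph: one must notice that it is membership in a flag --- not bare $w$-convexity --- that powers (i), and then extract the reachability fact that a $w$-convex set occurring in a flag always meets $\{1,n\}$; after that the argument is a short case check against the unimodal labelling of $S_K$. I would also note in passing that the degenerate cases $L=\emptyset$ (where (i) is vacuous and, as already shown, $\cd(S_K)$ is the classical single-peaked domain) and $K=\emptyset$ are covered by exactly these arguments.
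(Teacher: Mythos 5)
Your proof is correct. Part (ii) coincides with the paper's one-line argument: a $w$-convex set containing $b$ and $c$ must, by the third bullet of Definition~\ref{def:w-convex}, contain the white bead $a$ with $b<a<c$. For part (i) you follow the same underlying strategy as the paper --- exhibit a $w$-convex member $X$ of the flag with $a\in X$ and $b,c\notin X$, then contradict $w$-convexity using the circular layout \eqref{eq:spectrum} --- but your write-up is more complete. The paper takes $X$ to be the \emph{first} flag member containing $a$ (so that $a$ is an endpoint of the arc) and then simply asserts $X=\{\ell_i,\ldots,\ell_1,1,k_1,\ldots,k_j\}$; that assertion tacitly relies on exactly your reachability fact (a flag starts from a white singleton, and a black bead can only be appended once the all-white arc has grown to include $1$ or $n$), and it tacitly discards the other possible direction, in which the arc runs from $\ell_i$ through $\ell_t$ and $n$ (a case that also ends in contradiction, via $c$). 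You prove the reachability fact explicitly and treat both the $1\in X$ and the $n\in X$ cases, at the cost of a slightly longer case analysis because your $X$ need not have $a$ as an endpoint. So: same idea, but your version supplies the justification the paper leaves implicit.
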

 
 \begin{proof}
(i) Suppose $a\in L\subseteq [2,\ldots,n]$ is black and there is a linear order $v$ in $\cd(S_K)$ whose restriction to subset $\{a,b,c\}$ is $abc$ with $b<a<c$. Let $X$ be the first $w$-convex set from the flag corresponding to $v$ that contains $a$. Then $a=\ell_i$ and $X=\{\ell_{i},\ldots, \ell_1, 1, k_1,\ldots, k_j\}$  does not contain $b$ and $c$ (here it is possible that $j=0$). We note that $b\notin L$ as $b<\ell_i$ and not in $X$, thus $b=k_s$ with $s>j$. But then $1<k_s<\ell_i$ and $X$ is not $w$-convex. 

(ii) Suppose $a\in K\subseteq [2,\ldots,n]$ white and a certain linear order $v$ in $\cd(S_K)$ has restriction $bca$ to subset $\{a,b,c\}$ is  with $b<a<c$. Let $X$ be the largest $w$-convex set in the  flag corresponding to $v$ that still does not contain $a$. Then it contains $b$ and $c$ which contradicts to $w$-convexity of $X$.
 \end{proof}
 
  \begin{theorem}
  \label{SK=FK}
 $\cd(S_K)= F_K$.
 \end{theorem}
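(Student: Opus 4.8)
The plan is to prove the two inclusions separately. The inclusion $\cd(S_K)\subseteq F_K$ is immediate from Lemma~\ref{dom_S_K}: in any triple $i<j<k$ the middle element $j$ lies in $[2,n-1]$, hence is either a black bead, so that $j\in L$, or a white bead belonging to $K$. In the first case Lemma~\ref{dom_S_K}(i) gives $jN_{\{i,j,k\}}1$ and in the second Lemma~\ref{dom_S_K}(ii) gives $jN_{\{i,j,k\}}3$, which are exactly the conditions~\eqref{eq:alt_scheme}. Thus every order of $\cd(S_K)$ obeys the generalised alternating scheme and so lies in $F_K$.

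For the reverse inclusion I would fix $v=x_1\ldots x_n\in F_K$, put $X_m=\{x_1,\ldots,x_m\}$, and verify that each $X_m$ satisfies the three clauses of Definition~\ref{def:w-convex}. Two of them are quick. First, $x_1$ must be a white bead: were $x_1\in L$, applying~\eqref{eq:alt_scheme} to the triple $\{1,x_1,n\}$ would force $x_1$ not to be ranked first among these three, contradicting that $x_1$ is the top of $v$; in particular no $X_m$ is a single black bead. Second, $X_m$ cannot contain a white bead of $[n]$ lying (numerically) strictly between two of its own elements, since such a bead would be the middle element $j$ of a triple $i<j<k$ with $i,k\in X_m$ and $j\notin X_m$, hence ranked below both $i$ and $k$, violating the condition $jN_{\{i,j,k\}}3$ that~\eqref{eq:alt_scheme} imposes on a white middle element.

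The substantive step is showing that $X_m$ is an arc of $S_K$, and here I would split into four cases according to which of $1$ and $n$ belong to $X_m$. The second observation above already forces the white beads of $X_m$ to be a contiguous block of the white arc $1,k_1,\ldots,k_s,n$ (an initial block if $1\in X_m$, a final block if $n\in X_m$, an interior block if neither), so it remains to locate the black beads of $X_m$. If $1,n\notin X_m$ then $X_m$ has no black bead at all, since $\ell\in X_m$ together with the triple $\{1,\ell,n\}$ and~\eqref{eq:alt_scheme} would rank $\ell$ first among $\{1,\ell,n\}$. If exactly one of $1,n$ lies in $X_m$, say $1\in X_m$, then the black beads of $X_m$ are the $q$ smallest ones $\ell_1,\ldots,\ell_q$ for some $q\ge 0$: if $\ell_g\in X_m$ while $\ell_{g'}\notin X_m$ for some $g'<g$, the triple $\{\ell_{g'},\ell_g,n\}$ and~\eqref{eq:alt_scheme} rank $\ell_g$ first among the three, a contradiction; on the circle these $q$ beads are precisely the run of black beads adjacent to $1$, so together with the white block, which starts at $1$, they form one arc (the case $n\in X_m$ is symmetric, with $\ell_{t-q+1},\ldots,\ell_t$ adjacent to $n$). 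Finally, if $1,n\in X_m$ then $X_m$ contains every white bead, and for black beads $\ell_{g'}<\ell_g<\ell_{g''}$ with $\ell_{g'},\ell_{g''}\notin X_m$ we must have $\ell_g\notin X_m$ as well, for otherwise~\eqref{eq:alt_scheme} applied to $\{\ell_{g'},\ell_g,\ell_{g''}\}$ is violated; hence the black beads missing from $X_m$ form a run of consecutive indices, so $X_m$, which is the whole necklace minus that run, is an arc. Recalling that in $S_K$ the black beads occupy exactly the arc between $n$ and $1$, appearing in the order $\ell_t,\ell_{t-1},\ldots,\ell_1$, each of these descriptions exhibits $X_m$ as a set of consecutive positions on the circle. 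Consequently $X_1\subset\cdots\subset X_n$ is a flag of $w$-convex sets whose associated order is $v$, so $v\in\cd(S_K)$ and $F_K\subseteq\cd(S_K)$.

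The delicate part is the arc argument: each of the two never-condition checks is a single application of~\eqref{eq:alt_scheme}, but in the arc step one has to keep track of where the white block of $X_m$ sits relative to the two junction beads $1$ and $n$ of the necklace and then choose the right triple to feed into~\eqref{eq:alt_scheme}. I expect the main thing to get right is that in the mixed case the white part and the black part of $X_m$ actually glue into a single arc rather than two disjoint ones — which is exactly why the black beads of $X_m$ are forced to form a segment anchored at whichever end of the black arc is adjacent to the one of $1,n$ that belongs to $X_m$.
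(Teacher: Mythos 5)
Your proof is correct, and at the top level it follows the same strategy as the paper: the inclusion $\cd(S_K)\subseteq F_K$ via Lemma~\ref{dom_S_K}, and the reverse inclusion by showing that every initial segment $X_m$ of an order $v\in F_K$ is $w$-convex. The difference is in how much of $w$-convexity actually gets verified. The paper's argument for the reverse inclusion only rules out a violation of the third clause of Definition~\ref{def:w-convex} (an omitted white bead numerically between two members of $X_m$) and treats that as the only way $w$-convexity can fail; it never checks that $X_m$ is an arc, nor that $X_1$ is not a single black bead. Your case analysis shows that this extra work is genuinely needed: the third clause alone does not force a set to be an arc (for $n=5$ and $K=\{3\}$ the set $\{2,3,5\}$ satisfies the third clause but is not an arc of $S_K$), so one must invoke the never-top conditions on black beads --- exactly your triples $\{1,\ell,n\}$, $\{\ell_{g'},\ell_g,n\}$, $\{1,\ell_g,\ell_{g''}\}$ and $\{\ell_{g'},\ell_g,\ell_{g''}\}$ --- to pin the black beads of $X_m$ to the end of the black arc adjacent to whichever of $1,n$ lies in $X_m$. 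So your proposal is not just correct; it fills in a step the published proof leaves implicit. One small wording slip: in your second observation you say $X_m$ ``cannot contain'' the offending white bead, whereas the argument (correctly, as your own justification shows) concerns a white bead \emph{not} in $X_m$ lying numerically between two of its elements.
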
 
 
 \begin{proof}
 By Lemma~\ref{dom_S_K} we know that  $\cd(S_K)\subseteq  F_K$. Let us prove the converse. 
 Let $v=a_1\ldots a_n\in F_K$. We define the $k$-th ideal of $v$ as $\text{Id}_k(v)=\{a_1,\ldots, a_k\}$. It is enough to show that for any $k\in[n]$ the set $\text{Id}_k(v)$ is $w$-convex. Suppose not. Then there exist $a,b,c\in [n]$ with $a,c\in \text{Id}_k(v)$ and $b\notin \text{Id}_k(v)$ satisfying $a<b<c$ and $b\in K$ is white. Then the restriction of $v$ onto $\{a,b,c\}$ is $acb$ or $cab$ in violation of $bN_{\{a,b,c\}}3$. 
 This contradiction proves the theorem.
 \end{proof}

\begin{lemma}
\label{copiuos}
For any $K\subseteq [2,\ldots, n-1]$ the domain $F_K$ is copious.
\end{lemma}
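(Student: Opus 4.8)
To show that $F_K$ is copious I must check that $F_K|_{\{a,b,c\}}$ has exactly four orders for every triple. The plan is to exploit the identification $F_K=\cd(S_K)$ from Theorem~\ref{SK=FK} and to produce the required orders as flags of $w$-convex sets. Fix a triple $\{a,b,c\}$ with $a<b<c$. Since $F_K$ is a Condorcet domain, $F_K|_{\{a,b,c\}}$ has at most four orders, so it suffices to exhibit four. Two come for free: $12\ldots n$ and $n\ldots 21$ lie in $F_K$ (maximal width) and restrict to $abc$ and $cba$. By Lemma~\ref{dom_S_K} the never condition on $\{a,b,c\}$ is $bN_{\{a,b,c\}}3$ when $b\in K$, so the two missing admissible orders are then $bac$ and $bca$; and it is $bN_{\{a,b,c\}}1$ when $b\notin K$, so they are then $acb$ and $cab$.

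I would first reduce all of this to a single construction using two symmetries of the family of GF-domains. The reversal map $v\mapsto\bar v$ carries $F_K$ bijectively onto $F_L$ --- this is immediate from the never-condition description, since $j$ is last in $v|_{\{i,j,k\}}$ exactly when it is first in $\bar v|_{\{i,j,k\}}$ --- and it satisfies $\bar v|_{\{a,b,c\}}=\overline{v|_{\{a,b,c\}}}$; the reflection $\sigma\colon i\mapsto n{+}1{-}i$ carries $F_K$ onto $F_{\sigma(K)}$ and converts the pattern $bac$ on $\{a,b,c\}$ into the pattern $bca$ on $\sigma(\{a,b,c\})$. Since reversal turns $bac$ into $cab$ and $bca$ into $acb$, it is enough to establish the single claim: \emph{for every $K$ and every $a<b<c$ with $b\in K$, some order of $F_K$ restricts to $bac$ on $\{a,b,c\}$}. (The case $b\in K$, pattern $bca$, then follows by applying the claim to $F_{\sigma(K)}$; and the cases $b\notin K$, patterns $cab$ and $acb$, follow by applying it to $F_L$ and to $F_{\sigma(L)}$ and then reversing.)

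For the construction I would use a convenient restatement of Definition~\ref{def:w-convex}: an arc $X$ of the necklace $S_K$ is $w$-convex if and only if $X$ is not a single black bead and $X$ contains every white bead lying strictly between its smallest and largest element (this is just the third condition of Definition~\ref{def:w-convex} with $i$ and $k$ taken to be the extreme elements of $X$). Now let $b\in K$, so $b=k_m$ is white. Start the flag with $X_1=\{b\}$ and grow it towards the smaller alternative $a$: if $a$ is white (so $a=1$ or $a=k_j$ with $j<m$), move along the white top arc $1,k_1,\ldots,k_s,n$ and stop at the arc from $a$ to $b$; if $a$ is black (so $a=\ell_i$), move along the top arc as far as $1$ and then along the bottom arc to $\ell_i$, stopping at $Z=\{1,k_1,\ldots,k_m\}\cup\{\ell_1,\ldots,\ell_i\}$. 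In either case the arc $Z$ reached is $w$-convex (it is a sub-arc of the top arc when $a$ is white, and it contains every white bead $\le b$ when $a$ is black), it lies inside $[1,b]$, it misses $c$, it meets $\{a,b,c\}$ in exactly $\{a,b\}$, and it has $a$ as its last added bead. Finally complete $Z$ to $[n]$ by adding the remaining top beads one at a time and then the remaining bottom beads one at a time: the restated criterion makes every intermediate set $w$-convex, since the integer span of the growing arc never acquires a white bead other than the one just added, and once all white beads are present nothing can fail. Since $b\in X_1$, the bead $a$ appears in the flag after $b$ but before $c$, so the resulting order restricts to $bac$, which proves the claim.

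The symmetry reduction is clean, and once the restated $w$-convexity criterion is in hand the single remaining construction is essentially bookkeeping: one must verify at each step that the set is a genuine arc and that no intervening white bead is missing, the only slightly delicate point being the passage from the white top arc into the black beads --- which is exactly why the flag is organised so that one endpoint of the growing arc stays pinned (at $1$, or at $b$) while the other moves. I expect this verification to be the main obstacle. (If one preferred not to use the symmetries, the analogous direct constructions of $bca$, $acb$ and --- hardest --- of $cab$ when $c$ is itself a black bead are the genuinely fiddly cases: there a designated alternative must enter the flag before a white bead, so the growing arc has to be routed the long way round the circle through $n$, and checking $w$-convexity along that route requires care.)
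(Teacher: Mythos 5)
Your proof is correct, but it takes a genuinely different route from the paper's. The paper also works inside $\cd(S_K)$ via Theorem~\ref{SK=FK}, but it proceeds by a six-way case analysis on the colours of $a,b,c$, exhibiting in each case explicit chains of $w$-convex sets realising the four admissible restrictions. You instead note that only two restrictions need constructing (maximal width supplies $abc$ and $cba$, and the never condition caps the restriction at four), and you exploit two symmetries of the GF family --- reversal, which carries $F_K$ onto $F_L$, and the relabelling $i\mapsto n{+}1{-}i$, which carries $F_K$ onto $F_{\sigma(K)}$ --- to reduce everything to a single construction: a flag realising $bac$ when the middle alternative is white. Both symmetry claims check out against the never-condition description of $F_K$, and the construction itself is sound: the invariant you isolate (the integer span of the growing arc never acquires a white bead other than the one just added) is exactly what the restated convexity criterion demands, and it holds in the delicate black-$a$ case precisely because every black bead $\ell_1,\ldots,\ell_i$ added while building $Z$ lies below $b=k_m$, so the span stays $[1,k_m]$. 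Your approach buys economy and makes explicit two structural symmetries of the GF family that the paper never states and that are of independent interest; the paper's casework is longer but self-contained, requiring no auxiliary claims. One minor point: the identity of the never condition on $\{a,b,c\}$ follows directly from the definition of the generalised alternating scheme, so the appeal to Lemma~\ref{dom_S_K} there is not needed.
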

 
 \begin{proof}
 We will use Theorem~\ref{SK=FK} and consider $\cd(S_K)$ instead.
 Let $a,b,c\in [n]$ with $a<b<c$. 
 We need to consider several cases. 
 \begin{enumerate}
 \item $a,b,c$ are all white. Then $a=k_p$, $b=k_s$, $c=k_r$ with $p<s<r$. The following sets are $w$-convex:
 \[
 \{k_p\},  \{k_s\},  \{k_r\}, \{k_p,\ldots,k_s\}, \{k_s,\ldots, k_r\}, \{k_p,\ldots,k_s,\ldots,k_r\}.
 \] 
 Thus, $abc, cba, bac, bca$ all belong to the restriction of $S_K$ onto $\{a,b,c\}$.
\item $a,b,c$ are all black. Suppose $a=\ell_p$, $b=\ell_q$, $c=\ell_r$ with $p<q<r$. Let $K'=K\cup\{1\}\cup \{n\}$.
Note that every arc containing $K$ is $w$-convex. 
Then we have the following $w$-convex sets: 
$
K'\cup \{\ell_t,\ldots,\ell_r\}\subset K'\cup \{\ell_t,\ldots,\ell_q\}\subset K'\cup \{\ell_t,\ldots,\ell_p\}, 
$
hence the restriction of $S_K$ onto $\{a,b,c\}$ contains $cba$. Another sequence of $w$-convex sets  
$
K'\cup \{\ell_1,\ldots,\ell_p\}\subset K'\cup \{\ell_t,\ldots,\ell_q\}\subset K'\cup \{\ell_t,\ldots,\ell_r\}
$
gives us $abc$. Now, let us consider $K''=K'\cup \{\ell_1,\ldots,\ell_{p-1}\}\cup \{\ell_t,\ldots,\ell_{r-1}\}$. Then the sequence of 
$
K''\cup \{\ell_r\}\subset K''\cup \{\ell_r, \ell_p\}\subset [n]
$
gives us $cab$ and the sequence
$
K''\cup \{\ell_p\}\subset K''\cup \{\ell_r, \ell_p\}\subset [n]
$
gives us $cba$. Hence we have four suborders in $S_K|_{\{a,b,c\}}$.

 \item  $a$ is white; $b,c$ are black. Then obviously, $abc$ and $acb$ belong to the restriction of $S_K$ onto $\{a,b,c\}$. But also in  the restriction of $S_K$ onto $\{n,a,b,c\}$ we have $ncba$ and $ncab$, hence $cba$ and $cab$ belong to $S_K|_{\{a,b,c\}}$, so this restriction has four suborders.  
 
 \item  $b$ is white; $a,c$ are black. Then $bac$ and $bca$ are in $S_K|_{\{a,b,c\}}$ as well as $ncba$ and $1abc$  (or $1cba$ and $nabc$ belong to the restrictions of $S_K$ onto $\{n, a,b,c\}$ and $\{1, a,b,c\}$, respectively. Hence $cba$ and $abc$ are in $S_K|_{\{a,b,c\}}$ and this restriction has four suborders as well.  
 
 \item $a$ is black; $b,c$ are white. Then $bca$, $cba$, $bac$ and $1abc$ belong to respective restrictions, so four suborders. 
 
 \item If $a$ and $b$ are black and $c$ is white. Then $cba$ and $cab$ belong to $S_K|_{\{a,b,c\}}$ together with $1bca$ and $1bac$.  These are all possible cases.\qedhere
 \end{enumerate}
 \end{proof}

Combining Proposition~\ref{cdcncopious} with Lemma~\ref{copiuos} we get

\begin{theorem}
\label{FKmax}
For any $K\subseteq [2,\ldots,n-1]$ the domain $F_K$ is a maximal Condorcet domain.
\end{theorem}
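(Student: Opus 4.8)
The plan is essentially to quote the two ingredients that have already been assembled in the excerpt. Theorem~\ref{SK=FK} identifies $F_K$ with the necklace domain $\cd(S_K)$, and Lemma~\ref{copiuos} shows this domain is copious; meanwhile $F_K$ is by construction the set of all linear orders satisfying a complete set of never conditions, namely the generalised alternating scheme~\eqref{eq:alt_scheme}. So the structure of the proof is: first invoke the definition to write $F_K = \cd(\cn)$ where $\cn$ is the complete set of never conditions given by~\eqref{eq:alt_scheme}; second, invoke Lemma~\ref{copiuos} to note $\cd(\cn)=F_K$ is copious; third, apply Proposition~\ref{cdcncopious} to conclude $F_K$ is a maximal Condorcet domain.

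The one point that needs a line of justification is that $\cn$ as defined by~\eqref{eq:alt_scheme} really is a \emph{complete} set of never conditions in the sense of Definition~\ref{complete_set}: for every triple $i<j<k$ exactly one condition is imposed on the middle element $j$ — either $jN_{\{i,j,k\}}3$ (when $j\in K$) or $jN_{\{i,j,k\}}1$ (when $j\notin K$) — and these are indeed never conditions of the required form $xN_{\{a,b,c\}}\ell$. Hence $\cn$ contains exactly one never condition per triple, so it is complete, and $F_K=\cd(\cn)$ by the definition of GF-domain. There is a minor caveat: the endpoints $1$ and $n$ are never the middle element of a triple $i<j<k$, which is exactly why $K$ ranges over $[2,\dots,n-1]$ and why the scheme is well-defined; I would mention this only if being pedantic.

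I do not anticipate a genuine obstacle here — the theorem is a one-line corollary of the machinery already built. The only thing to be careful about is not to re-prove copiousness or re-prove $\cd(S_K)=F_K$: both are available as cited results, and the proof should simply chain them through Proposition~\ref{cdcncopious}. If one wanted to be fully self-contained one could also remark that copiousness guarantees $\cn$ is the \emph{unique} complete set of never conditions satisfied by $F_K$ (as observed in the paragraph preceding Proposition~\ref{cdcncopious}), but this is not needed for maximality. So the write-up is: \textbf{Proof.} By Lemma~\ref{copiuos} the domain $F_K$ is copious, and by definition $F_K=\cd(\cn)$ where $\cn$ is the complete set of never conditions~\eqref{eq:alt_scheme}. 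Proposition~\ref{cdcncopious} then gives that $F_K$ is a maximal Condorcet domain. $\qed$
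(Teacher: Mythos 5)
Your proposal is correct and matches the paper exactly: the paper's proof of Theorem~\ref{FKmax} is precisely the one-line combination of Lemma~\ref{copiuos} (copiousness) with Proposition~\ref{cdcncopious}, using that $F_K=\cd(\cn)$ for the complete set of never conditions~\eqref{eq:alt_scheme}. Your additional remark verifying that the generalised alternating scheme is indeed a \emph{complete} set of never conditions is a sensible (if implicit in the paper) piece of diligence.
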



 
The universal domain $\cl (A)$ has many representations. One of the most useful ones is by the {\em permutohedron} of order $n$ \citep{mon:survey}, 
whose vertices  are labeled by the permutations of $[n]$ from the symmetric group $S_n$. Two vertices are connected by an edge if their permutations differ in only two neighbouring places. 
Domains can be considered as a subgraphs of the permutohedron.

\begin{definition}
\label{def:sem_con}
A domain $\cd$ of maximal width is called {\em semi-connected} if the two completely reversed orders
$e$ and $\bar{e}$ from $\cd$ can be connected by a shortest path (geodesic path) in the permutohedron whose all vertices belong to $\cd$. 
%
It is {\em directly connected}, if any two orders of a domain are connected by a shortest path in the permutohedron that stays within the domain.
\end{definition}

Maximality of GF-domains has a number of profound consequences.

\begin{theorem} 
Every GF-domain $\cd$ is a directly connected domain of maximal width.
\end{theorem}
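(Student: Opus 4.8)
The plan is to prove that every GF-domain is directly connected of maximal width, using the combinatorial necklace representation $\cd(S_K)$ established in Theorem~\ref{SK=FK}. Maximal width is immediate: the flag $\{1\}\subset\{1,k_1\}\subset\cdots\subset[n]$ built by absorbing beads in the cyclic order of the spectrum~\eqref{eq:spectrum} gives the order $e=1k_1\ldots k_s n\ell_t\ldots\ell_1$, and reading the spectrum the other way gives $\bar e$; both sequences of arcs are $w$-convex, so $e,\bar e\in\cd(S_K)$, and since every GF-domain contains $12\ldots n$ and $n\ldots 21$ as noted after the definition, it already has maximal width in the standard sense. (In fact it suffices to recall that observation directly.)

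For direct connectedness I would argue as follows. Fix two orders $u,v\in F_K=\cd(S_K)$; let $u$ correspond to the flag $X_1\subset\cdots\subset X_n$ and $v$ to the flag $Y_1\subset\cdots\subset Y_n$ of $w$-convex sets. The number of inversions between $u$ and $v$, i.e.\ the graph distance in the permutohedron, equals the number of pairs $\{x,y\}$ ordered oppositely. The key step is to show that whenever $u\neq v$ there is an adjacent transposition applicable to $u$ that (a) stays inside $\cd(S_K)$ and (b) strictly decreases the distance to $v$; iterating this yields a geodesic from $u$ to $v$ within the domain. Concretely, let $j$ be the smallest index where the two flags first differ, so $X_{j-1}=Y_{j-1}=:Z$ but $X_j\neq Y_j$. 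Then $Z\cup\{x\}$ and $Z\cup\{y\}$ are both $w$-convex for distinct beads $x\neq y$ with $x$ the $(j)$th element of $u$ and $y$ of $v$; since $Z$ is an arc and adding either of $x,y$ keeps it an arc, $x$ and $y$ are the two beads adjacent to the arc $Z$ on the circle. Now in $u$, the bead $y$ sits somewhere after position $j$; I would show that $y$ can be "pulled up" one step at a time — each intermediate set $Z\cup\{x,\ldots\}\cup\{y\}$ obtained along the way is $w$-convex because $Z\cup\{y\}$ is an arc abutting $Z$ and the beads between are exactly those filling the arc on $x$'s side — until $y$ reaches position $j$, at which point the new first point of difference moves past $j$. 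Each such elementary move is an adjacent transposition in the permutohedron, lies in $\cd(S_K)$, and removes exactly one inversion relative to $v$ (the pair whose relative order we just corrected), so the total path length equals the initial distance and the path is a geodesic.

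The main obstacle is the verification that the intermediate ideals stay $w$-convex while $y$ is pulled up past the beads $x=z_1,z_2,\ldots$ currently above it in $u$. The delicate point is the third clause of Definition~\ref{def:w-convex}: after swapping $y$ with the bead immediately above it, the new ideal must not "skip over" a white bead. This is where one uses that $Z\cup\{y\}$ is itself $w$-convex (so $y$ is a legal next bead over $Z$) together with the structure of the spectrum~\eqref{eq:spectrum}: the arcs that are $w$-convex and contain $Z$ are totally ordered by how far they extend on the $x$-side versus the $y$-side, and the beads lying strictly between $Z\cup\{y\}$ and a larger $w$-convex set are precisely a terminal segment of the $x$-side arc, which can always be reordered. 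I would handle this by working on the circle directly: an ideal is $w$-convex iff it is an arc containing $1$-adjacent white endpoints appropriately, and the pull-up operation corresponds to peeling one bead off the $x$-side and gluing it, which manifestly preserves arc-ness and the white-endpoint condition. Once this local lemma is in place, the induction on distance to $v$ closes the proof, and direct connectedness (hence also semi-connectedness) follows.
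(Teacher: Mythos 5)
Your proposal takes a genuinely different route from the paper, but it contains a fatal gap. The paper does not argue combinatorially at all: it notes maximal width, observes that GF-domains are peak-pit domains, invokes Theorem~\ref{FKmax} (maximality) together with Theorem~2 of Danilov--Karzanov--Koshevoy (2012) to conclude that $\cd$ is a tiling domain and hence semi-connected, and then cites Puppe's observation that maximal semi-connected domains are directly connected. Your attempt to replace these citations by an explicit geodesic construction founders on exactly the step you flag as ``the delicate point'': the claim that the intermediate ideals remain $w$-convex during the pull-up is false, because the third clause of Definition~\ref{def:w-convex} is about \emph{numerical} betweenness, not adjacency on the circle, so ``peeling one bead off the $x$-side and gluing $y$ on'' does not ``manifestly'' preserve it.

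Concretely, take $n=5$, $K=\{3,4\}$, $L=\{2\}$, so the necklace~\eqref{eq:spectrum} is $1\triangleleft 3\triangleleft 4\triangleleft 5\triangleleft 2\triangleleft 1$ with $1,3,4,5$ white and $2$ black. Let $u=12345$ and $v=54321$, both of which lie in $F_K$. The flags first differ at $j=1$ with $y=5$, so your algorithm's first move is to swap $5$ past $4$, producing $12354$. But the resulting ideal $\{1,2,3,5\}$ is an arc that skips the white bead $4$ with $3<4<5$, hence is not $w$-convex; equivalently, $12354$ violates $4N_{\{3,4,5\}}3$ and is not in $F_K$. (Pushing $x=1$ down instead also fails, since $\{2\}$ is a single black bead; a geodesic from $u$ to $v$ inside the domain does exist, but it must begin with a swap at a different position, e.g.\ $12345\to 13245$.) Note also that when $Z=\emptyset$ your assertion that $x$ and $y$ are the two beads adjacent to the arc $Z$ is vacuous and the intermediate sets need not even be arcs ($\{1,5\}$ is not an arc in this necklace). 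So the local lemma your induction rests on is false as stated, and a correct direct proof would need a genuinely different rule for selecting which adjacent inversion to resolve; the paper avoids this entirely by routing through maximality and the cited structural results.
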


\begin{proof}
We have already noticed that GF-domains have maximal width containing $12\ldots n$ and $n\ldots 21$. By their definition, they are also the so-called peak-pit domains which means that they satisfy a complete set of never-top and never-bottom conditions. By Theorem~2  of \cite{DKK:2012} maximality of $\cd$ implies that this is a tiling domain and, in particular, it is semi-connected.  It has been observed in \cite{Puppe16} (Proposition A.1) that maximal semi-connected domains are   directly connected.
\end{proof}

Let us now give a formal definition of a domain single-peaked on a circle.

\begin{definition}[\cite{peters2020preferences}]
A linear order $v\in \cl(A)$ is said to be {\em single-peaked on a circle}, if alternatives from $A$ can be placed on a circle 
\[
a_1\triangleleft a_2\triangleleft \cdots\triangleleft a_n\triangleleft a_1
\]
in anticlockwise order so that for every alternative $a\in A$ the upper counter set $U(a,v)=\{b\in A \mid b\succ_v a\}$ is a contiguous arc of the circle. 

A domain $\cd\subseteq \cl(A)$  is said to be {\em single-peaked on a circle} if there exists an arrangement of alternatives on that circle such that each order of $\cd$ is single-peaked on a circle relative to their common arrangement of alternatives. 
\end{definition}
Our Theorem~\ref{SK=FK} as a corollary provides a constructive proof of the following theorem.
 
  \begin{corollary}[\cite{karpov23}]
 \label{thm:SPOC}
 Every GF-domain $F_K$ is single-peaked on a circle.
 \end{corollary}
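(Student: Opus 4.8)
The plan is to read off single-peakedness on a circle directly from the combinatorial representation $\cd(S_K)=F_K$ established in Theorem~\ref{SK=FK}. The circular arrangement of alternatives will be exactly the spectrum~\eqref{eq:spectrum} used to build the necklace $S_K$, namely
\[
1\triangleleft k_1\triangleleft\ldots\triangleleft k_s\triangleleft n\triangleleft \ell_t\triangleleft \ldots\triangleleft \ell_1\triangleleft 1.
\]
So the only thing to verify is that, with respect to this fixed circular order, every $v\in F_K$ has all its upper contour sets $U(a,v)$ equal to contiguous arcs of the circle. By Theorem~\ref{SK=FK}, $v$ corresponds to a flag $X_1\subset X_2\subset\cdots\subset X_n=[n]$ of $w$-convex sets, and the upper contour sets of $v$ are precisely the sets $U(a,v)=X_{j-1}$ where $a=x_j$ is the element added at step $j$ (together with $U$ of the top element, which is empty, and with $X_n=[n]$ itself, which is the whole circle hence trivially an arc). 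Since each $X_j$ is $w$-convex, the first bullet of Definition~\ref{def:w-convex} already guarantees that $X_j$ is an arc of the circle. Hence every upper contour set of $v$ is a contiguous arc, which is exactly the condition in the definition of single-peakedness on a circle.

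First I would state the chosen circular arrangement as~\eqref{eq:spectrum}, anticlockwise. Then, given an arbitrary $v=x_1x_2\ldots x_n\in F_K$, invoke Theorem~\ref{SK=FK} to obtain the corresponding flag of $w$-convex sets $X_1\subset\cdots\subset X_n=[n]$ with $\{x_j\}=X_j\setminus X_{j-1}$. Next observe that for each alternative $a=x_j$ the upper contour set is $U(a,v)=\{x_1,\ldots,x_{j-1}\}=X_{j-1}$, and that $X_{j-1}$ is an arc of the circle by the first clause of Definition~\ref{def:w-convex} (for $j=1$ we have $U(x_1,v)=\emptyset$, which is vacuously an arc, and for $a=x_n$ we may note $U=X_{n-1}$ is still $w$-convex). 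Since this holds for every $a\in A$ and every $v\in F_K$ with respect to the single common arrangement~\eqref{eq:spectrum}, the domain $F_K$ is single-peaked on a circle by definition.

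There is essentially no obstacle here: the whole point of the combinatorial representation is that $w$-convexity was designed so that $w$-convex sets are arcs, and the flag condition makes the $X_j$'s exactly the upper contour sets. The only mild subtlety worth a sentence is the boundary cases in the indexing ($U(x_1,v)=\emptyset$ and the top element), and the remark that the ``not a single black bead'' and ``no enclosed white bead'' clauses of $w$-convexity are not needed for \emph{this} corollary --- they were needed to pin down $\cd(S_K)=F_K$ in Theorem~\ref{SK=FK}, but for single-peakedness on a circle only the ``arc'' clause is invoked. I would close by noting that this recovers Karpov's result constructively, the arrangement being explicitly the one in~\eqref{eq:spectrum} rather than obtained by an abstract existence argument.
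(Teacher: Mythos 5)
Your proposal is correct and is essentially the paper's own argument: the paper likewise invokes Theorem~\ref{SK=FK} and concludes from the fact that every upper contour set of an order in $\cd(S_K)$ is a contiguous arc of the necklace, with the circular arrangement being~\eqref{eq:spectrum}. You merely spell out the identification of the upper contour sets with the members $X_{j-1}$ of the flag and the boundary cases, which the paper leaves implicit.
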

 
 \begin{proof}
By Theorem~\ref{SK=FK}  $F_K$ is isomorphic to $\cd(S_K)$. The statement now follows from the fact that any upper contour set of this domain is a contiguous arc of the necklace.
\end{proof}

Original Karpov's proof was based on the characterisation of single-peaked on a circle domains by means of forbidden configurations given in \cite{peters2020preferences}.\par\medskip

The question may be asked: Are all peak-pit maximal Condorcet domains of maximal width are single-peaked on a circle? The answer is negative. 

\begin{theorem}
For any $n\ge 4$ single-crossing maximal Condorcet domain are not single-peaked on a circle.
\end{theorem}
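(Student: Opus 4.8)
The approach is to produce, for each $n \geq 4$, an explicit single-crossing maximal Condorcet domain and show it violates single-peakedness on a circle. First I would recall the standard description of the maximal single-crossing domain $\mathcal{M}_n$ on $[n]$: it is the domain obtained from the sequence of orders $v_0 = 12\ldots n$, $v_1, \ldots, v_m = n\ldots 21$ where each $v_{i+1}$ is obtained from $v_i$ by swapping one adjacent pair, so that $\mathcal{M}_n$ is a maximal chain in the weak Bruhat order (a geodesic from $e$ to $\bar e$ in the permutohedron that realises every inversion exactly once). This domain has maximal width and is a maximal Condorcet domain; it is the unique (up to isomorphism) maximal single-crossing domain, and for concreteness one may take the ``staircase'' chain in which inversions are introduced in a fixed lexicographic order.

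Next I would invoke the forbidden-configuration characterisation of single-peakedness on a circle from \cite{peters2020preferences}: a domain is single-peaked on a circle iff it avoids certain small configurations of orders on a bounded number of alternatives. Equivalently — and this is the route I would actually carry out — I would use the fact that a domain single-peaked on a circle, when restricted to any four alternatives, embeds into the single-peaked-on-a-circle domain on those four points, which has a bounded number of orders; so it suffices to exhibit four alternatives $a<b<c<d$ in $[n]$ on which $\mathcal{M}_n$ restricts to a configuration of orders that cannot be simultaneously single-peaked on any circular arrangement of $\{a,b,c,d\}$. For four points there are only three essentially different circular arrangements, and for each one the set of orders that are single-peaked on it can be listed explicitly; I would then check that the restriction $\mathcal{M}_n|_{\{a,b,c,d\}}$ — which, being a maximal single-crossing domain on four elements, contains $1+\binom{4}{2} = 7$ orders forming a maximal chain of length $6$ — is not contained in any of these three lists. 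Concretely, on $\{1,2,3,4\}$ the staircase single-crossing domain is $\{1234,2134,2314,2341,3241,3421,4321\}$ or similar, and one verifies by inspection that no circular order on four beads makes all seven of these single-peaked.

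The key observation making the four-element reduction work is that the maximal single-crossing domain on $[n]$ restricts onto any $4$-subset as the maximal single-crossing domain on those four elements (this is standard: restricting a single-crossing domain yields a single-crossing domain, and a maximal chain restricts to a maximal chain in the relevant sub-poset), so the bad $4$-element configuration is always present once $n \geq 4$. The main obstacle is purely bookkeeping: pinning down precisely which four-element restriction to use and verifying the non-containment against all three circular arrangements of four points — this is a finite check but must be done carefully, and it is cleanest to phrase it via the forbidden configurations of \cite{peters2020preferences} rather than enumerating by hand. An alternative, possibly slicker, finish is to note that a domain that is both single-crossing and single-peaked on a circle must in fact be single-peaked on a \emph{line} (the circular cut can be taken uniformly for a single-crossing chain, since the ``extreme'' orders $12\ldots n$ and $n\ldots 21$ force the arc structure), and then appeal to the known fact that the maximal single-crossing domain is strictly larger than any single-peaked domain for $n \geq 4$; however I expect the direct forbidden-configuration argument to be the most self-contained.
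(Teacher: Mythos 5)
Your proposal has a genuine gap at its foundation: you treat ``the maximal single-crossing maximal Condorcet domain'' as a single object, unique up to isomorphism, namely an arbitrary maximal chain in the weak Bruhat order, and you propose to verify one explicit example. Neither half of this is right. First, not every maximal chain from $12\ldots n$ to $n\ldots 21$ is a maximal \emph{Condorcet} domain (a geodesic that happens to lie inside the classical single-peaked domain is a counterexample: it has $\binom{n}{2}+1<2^{n-1}$ orders and is properly contained in a larger Condorcet domain), and the ones that are maximal Condorcet domains are not interchangeable with an arbitrary ``staircase'' chain. Second, and fatally for your four-element reduction, it is \emph{false} that every maximal chain on four alternatives fails to be single-peaked on a circle: a geodesic inside $SP_4(\triangleleft)$ is a maximal chain of seven orders that is single-peaked on a line, hence on a circle. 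So ``restrict to any $4$-subset and check the $7$ orders'' cannot succeed without first knowing \emph{which} maximal chains actually arise as (restrictions of) single-crossing maximal Condorcet domains. That structural input is exactly what the paper uses and what your proposal omits: the relay characterisation of \cite{slinko2021characterization}, which forces the swap sequence to begin $(1,2),(1,3),(1,4),\ldots$, so that the domain contains three orders whose top-two sets are $\{1,2\}$, $\{1,3\}$ and $\{1,4\}$. Since in a domain single-peaked on a circle every top-two set must be a contiguous arc, and a bead on a circle has only two neighbours, these three sets cannot coexist --- a one-line obstruction needing no enumeration over circular arrangements.

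Your ``slicker'' alternative finish also fails as stated: you claim the maximal single-crossing domain is \emph{strictly larger} than any single-peaked domain for $n\ge 4$, but the inequality goes the other way ($\binom{n}{2}+1 \le 2^{n-1}$ with equality only at $n=3$). The correct version of that idea would be: if a single-crossing maximal Condorcet domain were single-peaked on a line, it would be a proper subdomain of $SP_n(\triangleleft)$ by this cardinality count, contradicting its maximality as a Condorcet domain. But the bridging step --- that single-crossing together with single-peaked on a circle forces single-peakedness on a line --- is asserted without proof and is not obvious. To repair your write-up, replace the uniqueness claim with the relay-structure characterisation and extract the three-arcs-through-one-bead obstruction directly.
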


\begin{proof}
\cite{slinko2021characterization} characterised all single-crossing maximal Condorcet domains in terms of the relay structure. In this structure linear orders are arranged in a sequence $v_1,v_2,v_3,\ldots$, so that moving from left to right 1 initially moves from top to bottom being swapped with $2, 3, 4, \ldots$. The sequence of the first three swapping pairs is:
$
(1,2),\ (1,3),\ (1,4)\ldots. 
$
The three linear orders $v_1,v_2,v_3$ have $\text{Id}_2(v_1)=\{1,2\}$,  $\text{Id}_2(v_2)=\{1,3\}$ and $\text{Id}_2(v_3)=\{1,4\}$. But It is impossible to have such three arcs on a circle. 
\end{proof}

In this proof effectively we spotted in any single-crossing maximal Condorcet domain one of the forbidden configurations described in~\cite{peters2020preferences}.

\section{Conclusion and future work}

Now we know that the single-peaked domain, Fishburn's domain and single-dipped domain are members of the same family with single-peaked and single-dipped being the two extremes.
It would be interesting to investigate how the size of domain $F_K$ depends on $K$. It is well-known that when $K=[2,\ldots,n-1]$ or $K=\emptyset$ (the case of classical single-peaked and single-dipped domains) we have $|F_K|=2^{n-1}$ and when $K$ is the set of even numbers in $[2,\ldots,n-1]$ (the case of classical Fishburn's domain)  \cite{GR:2008} gave the exact formula for the cardinality of $F_K$:
\begin{equation*}
\label{card_alt_s}
|F_K|= (n+3)2^{n-3} - 
\begin{cases}
(n-\frac{3}{2}){n-2\choose \frac{n}{2}-1} & \text{for even $n$;}\\
(\frac{n-1}{2}){n-1\choose \frac{n-1}{2}} & \text{for odd $n$.}
\end{cases}
\end{equation*} 
It is reasonable to conjecture that these are the most extreme cases and the cardinality of $|F_K|$ for various $K$ must be somewhere in between. However, since Fishburn's domain is not the largest peak-pit domain of maximal width for at least $n\ge 34$ \citep{karpov2023constructing},  we do not even know if Fishburn's domains are the largest among all GF-domains. 
 
 \section{Acknowledgements}
 
 The author thanks A. Karpov for bringing the paper \cite{DKK:2010} to his attention.

 \bibliographystyle{plainnat}
\bibliography{cps-survey}

\begin{thebibliography}{16}
\providecommand{\natexlab}[1]{#1}
\providecommand{\url}[1]{\texttt{#1}}
\expandafter\ifx\csname urlstyle\endcsname\relax
  \providecommand{\doi}[1]{doi: #1}\else
  \providecommand{\doi}{doi: \begingroup \urlstyle{rm}\Url}\fi

\bibitem[Black(1958)]{black1958theory}
D.~Black.
\newblock \emph{The theory of committees and elections}.
\newblock Cambridge: Cambridge University Press, 1958.

\bibitem[Danilov et~al.(2010)Danilov, Karzanov, and Koshevoy]{DKK:2010}
V.I. Danilov, A.V. Karzanov, and G.A. Koshevoy.
\newblock Condorcet domains and rhombic parquettes (in {R}ussian).
\newblock \emph{Economics and Mathematical Methods}, 46\penalty0 (4):\penalty0
  55--68, 2010.

\bibitem[Danilov et~al.(2012)Danilov, Karzanov, and Koshevoy]{DKK:2012}
V.I. Danilov, A.V. Karzanov, and G.A. Koshevoy.
\newblock Condorcet domains of tiling type.
\newblock \emph{Discrete Applied Mathematics}, 160\penalty0 (7-8):\penalty0
  933--940, 2012.

\bibitem[Fishburn(1997)]{PF:1997}
P.C. Fishburn.
\newblock Acyclic sets of linear orders.
\newblock \emph{Social Choice and Welfare}, 14:\penalty0 113--124, 1997.

\bibitem[Galambos and Reiner(2008)]{GR:2008}
A.~Galambos and V.~Reiner.
\newblock Acyclic sets of linear orders via the {B}ruhat orders.
\newblock \emph{Social Choice and Welfare}, 30\penalty0 (2):\penalty0 245--264,
  2008.

\bibitem[Karpov(2023)]{karpov23}
A.~Karpov.
\newblock Preferences over mixed manna.
\newblock In \emph{Data Analysis and Optimization: In Honor of Boris Mirkin's
  80th Birthday}, pages 169--178. Springer, 2023.

\bibitem[Karpov and Slinko(2023)]{karpov2023constructing}
A.~Karpov and A.~Slinko.
\newblock Constructing large peak-pit condorcet domains.
\newblock \emph{Theory and Decision}, 94\penalty0 (1):\penalty0 97--120, 2023.

\bibitem[Monjardet(2009)]{mon:survey}
B.~Monjardet.
\newblock Acyclic domains of linear orders: A survey.
\newblock In S.~Brams, W.~Gehrlein, and F.~Roberts, editors, \emph{The
  Mathematics of Preference, Choice and Order}, Studies in Choice and Welfare,
  pages 139--160. Springer Berlin Heidelberg, 2009.

\bibitem[Peters and Lackner(2020)]{peters2020preferences}
D.~Peters and M.~Lackner.
\newblock Preferences single-peaked on a circle.
\newblock \emph{Journal of Artificial Intelligence Research}, 68:\penalty0
  463--502, 2020.

\bibitem[Puppe(2016)]{Puppe16}
C.~Puppe.
\newblock The single-peaked domain revisited: A simple global characterization.
\newblock \emph{KIT Working Paper 97}, 2016.

\bibitem[Puppe(2018)]{Puppe2018}
C.~Puppe.
\newblock The single-peaked domain revisited: A simple global characterization.
\newblock \emph{Journal of Economic Theory}, 176:\penalty0 55 -- 80, 2018.

\bibitem[Puppe and Slinko(2019)]{puppe2019condorcet}
C.~Puppe and A.~Slinko.
\newblock Condorcet domains, median graphs and the single-crossing property.
\newblock \emph{Economic Theory}, 67\penalty0 (1):\penalty0 285--318, 2019.

\bibitem[Puppe and Slinko(2023)]{slinko-puppe23}
C.~Puppe and A.~Slinko.
\newblock Maximal {C}ondorcet domains. {A} further progress report.
\newblock \emph{Games and Economic Behavior}, 2023.
\newblock to appear.

\bibitem[Sen(1966)]{Sen1966}
A.~K. Sen.
\newblock A possibility theorem on majority decisions.
\newblock \emph{Econometrica}, 34:\penalty0 491--499, 1966.

\bibitem[Slinko(2019)]{slinko2019}
A.~Slinko.
\newblock Condorcet domains satisfying {A}rrow's single-peakedness.
\newblock \emph{Journal of Mathematical Economics}, 84:\penalty0 166--175,
  2019.

\bibitem[Slinko et~al.(2021)Slinko, Wu, and Wu]{slinko2021characterization}
A.~Slinko, Q.~Wu, and X.~Wu.
\newblock A characterization of preference domains that are single-crossing and
  maximal {C}ondorcet.
\newblock \emph{Economics Letters}, 204:\penalty0 109918, 2021.

\end{thebibliography}
\end{document}